\documentclass{amsart}
\usepackage{graphicx} % Required for inserting images
\usepackage{mathrsfs}
\usepackage{amsfonts}
\usepackage{amsthm}
\usepackage{amsmath}
\usepackage{url}
\usepackage{amssymb}
\usepackage{hyperref}

\newtheorem{thm}{Theorem}[section]
\newtheorem{lem}[thm]{Lemma}
\newtheorem{prop}[thm]{Proposition}

\newtheorem{conj}{Conjecture}[section]

\theoremstyle{definition}
\newtheorem{defn}[thm]{Definition}

\DeclareMathOperator{\Alb}{Alb}
\DeclareMathOperator{\Gal}{Gal}
\DeclareMathOperator{\Min}{Min}
\DeclareMathOperator{\Sh}{Sh}

\newcommand{\R}{\mathbb{R}}
\newcommand{\univ}[1]{\tilde{#1}}

\title{A Real Shafarevich Conjecture for Universal Covers}

\author{Rodolfo Aguilar}
\address{\parbox{\linewidth}{Centro de Investigacion en Matemáticas, A.C. (CIMAT), Jalisco S/N, Col. Valenciana CP: 36023 Guanajuato, Gto, México}}
\email{\href{mailto:aaguilar.rodolfo@gmail.com}{aaguilar.rodolfo@gmail.com}}
\urladdr{\url{https://sites.google.com/view/rodolfo-aguilar/}}

\author{Cristhian Garay }
\address{\parbox{\linewidth}{Centro de Investigacion en Matemáticas, A.C. (CIMAT), Jalisco S/N, Col. Valenciana CP: 36023 Guanajuato, Gto, México}}

\email{cristhian.garay@cimat.mx}

\date{}

\begin{document}

\maketitle

\begin{abstract}
    The classical Shafarevich conjecture predicts that the universal cover of a complex smooth projective variety $X$ is holomorphically convex. In this paper, we propose a refinement of this conjecture for varieties defined over the reals. In order to do this, we introduce the notions of real holomorphic convexity and transverse holomorphic convexity to capture the geometric differences dictated by the real locus $X(\mathbb{R})$ of $X$. Specifically, we conjecture that the universal cover is real holomorphically convex when $X(\mathbb{R}) \neq \emptyset$, and dianalytic holomorphically convex when $X(\mathbb{R}) = \emptyset$. We prove this refined conjecture in two main cases: when $X$ is a curve, and when the fundamental group of $X$ is nilpotent.
\end{abstract}
\section{Introduction}

Let $X$ be a complex smooth projective variety. The classical Shafarevich conjecture states that its universal cover $\tilde{X}$ is holomorphically convex—that is, $\tilde{X}$ admits a proper holomorphic map to a Stein space. Recall that a Stein space is a complex analytic space that admits a continuous strictly plurisubharmonic proper exhaustion function $\psi$, see Section \ref{S:Prem} for details.  

Initial progress on the conjecture focused on surfaces with special structures \cite{GS85}. Then, the field shifted by restricting the structure of the fundamental group $\pi_1(X)$. This led to proofs for nilpotent \cite{K97}, reductive \cite{KR98,E04}, and ultimately linear fundamental groups \cite{EKPR12}. See the survey \cite{E11}. 

More recently, the focus shifted once again to allow $X$ to be normal quasi-projective, following the same development as the smooth projective case: for nilpotent groups \cite{GGK24,AC25}, for the reductive \cite{DYK23, B23} and finally the linear case \cite{BBT}. See also the recent survey \cite{D25}. It seems hard to go considerably further with the usual techniques.

From a different viewpoint, algebraic geometers have given a fresh view to classical problems by changing the field of definition of the variety, see \cite{K00}. Real algebraic geometry has been particularly fruitful recently, see \cite{BP24, FM25, BBG}. What we propose here is a refinement of the Shafarevich conjecture to the case when $X$ is a complex normal quasi-projective variety defined over the reals. For ease of exposition, we restrict our attention in this introduction to the smooth projective case; however, our results and methods extend naturally to normal quasi-projective varieties.

Let us consider a pair $(\tilde{X},\tilde{\sigma})$ consisting of  a complex manifold with an antiholomorphic involution $\tilde{\sigma}:\tilde{X}\xrightarrow[]{}\tilde{X}$. We say that $(\tilde{X},\tilde{\sigma})$ is real holomorphically convex if there exists a pair  $(\tilde{Y},\tilde{\sigma}_Y)$ consisting of a Stein space $\tilde{Y}$ together with an antiholomorphic involution $\tilde{\sigma}_Y$ and a proper equivariant holomorphic map $f:(\tilde{X},\tilde{\sigma}_X)\to (\tilde{Y}, \tilde{\sigma}_Y)$.

We can then state:

\begin{conj}\label{conj:Int1} Let $X$ be a complex smooth projective variety defined over $\mathbb{R}$ such that $X(\mathbb{R})\not = \varnothing$. Then its universal cover is real holomorphically convex.
\end{conj}

We use the real points of $X$ to lift the real structure of $X$ to a real structure on the universal cover, see Proposition \ref{prop:rscover}.

The classical strategy  to attack the Shafarevich conjecture --which is outlined in \cite{EKPR12}-- goes roughly as follows: first we have to construct the so-called Shafarevich variety $\Sh(X)$, see \cite{K95, C94} and the Shafarevich map $X\to \Sh(X)$. Then, show that the universal cover $\widetilde{\Sh(X)}$ is Stein. Therefore, to study Conjecture \ref{conj:Int1}, the strategy would be to check that at every stage of the above steps, the constructions give compatible real structures and equivariant proper maps with respect to these structure. 

Conjecture \ref{conj:Int1} follows easily for Riemann surfaces as a consequence of Riemann's uniformization theorem. See Proposition \ref{thm:CurvesNonEmpty}. When $X$ has nilpotent fundamental group, we follow the above strategy to show that the conjecture also holds, see Theorem \ref{thm:NilNonEmpty}.

In the case that $X(\mathbb{R})=\emptyset$, we cannot lift the real structure on $X$ to a real structure (an involution) on $\tilde{X}$: since there are no real basepoints to fix, we merely obtain a fixed-point-free antiholomorphic automorphism $\tilde{\sigma}_X$ whose square $\tilde{\sigma}_X^2$ is a non-trivial deck transformation. However, the Shafarevich variety is expected to admit a metric of non-positive curvature \cite{K95, DW25}. Viewing the universal cover as a CAT(0) space, the compactness of $X$ implies that the action of this automorphism $\tilde{\sigma}_X$ is cocompact and therefore semisimple (see Section \ref{ss:Metric geometry}). Consequently, $\tilde{\sigma}_X$ acts geometrically as an axial isometry.

This geometry guarantees the existence of a special totally real locus $L$ (the Min-Set), where the displacement function of $\tilde{\sigma}_X$ attains its global minimum. By the structure theorem for semisimple isometries, $L$ is a non-empty, totally geodesic subspace which splits isometrically as $Y \times \mathbb{R}$, and along which $\tilde{\sigma}_X$ acts as a translation. Because $L$ contains infinite geodesic lines (the translation axes), it is non-compact.

Therefore, any invariant plurisubharmonic exhaustion function $\psi$ constructed from the distance to $L$ will fail to be a global proper exhaustion function, as its sublevel sets contain the infinite translation orbits along $L$. This geometric obstruction appears even in the simplest case of Riemann surfaces.

Our solution to this is to pass to the quotient $\tilde{X}/\langle \tilde{\sigma}\rangle$, where $L$ has become compact. This necessitates leaving the category of complex manifolds and passing to dianalytic manifolds, manifolds where the transition functions are either holomorphic or antiholomorphic, see Section \ref{ss:dianalytic}. Moreover, it is not difficult to show that the definition of plurisubharmonicity using the Hessian criterion is invariant under pullbacks from antiholomorphic invertible maps, see Lemma \ref{lem:SteinInva}, which allows us to define the notion of a Stein dianalytic space. 

Thus, we will say then that a pair $(\tilde{X}, \tilde{\sigma})$ with $\tilde{X}$ is a complex manifold and $\tilde{\sigma}$ is a fixed-point-free antiholomorphic automorphism is \emph{dianalytic holomorphically convex} if we have an equivariant proper holomorphic map $(\tilde{X}, \tilde{\sigma})\to (\tilde{Y}, \tilde{\sigma}_Y)$ such that $\tilde{Y}/\langle  \tilde{\sigma}_Y \rangle$ is a Stein dianalytic space. With this language, we can state:

\begin{conj}\label{conj:Int2} Let $X$ be a smooth projective variety defined over the reals such that $X(\mathbb{R})=\emptyset$. Then the pair  $(\tilde{X}, \tilde{\sigma})$ is dianalytic holomorphically convex. 
\end{conj}

In Theorem \ref{thm:curvesEmpty}, we use Riemann's uniformization theorem to prove Conjecture \ref{conj:Int2} for $X$ of dimension one. In Theorem \ref{thm:NilNonEmpty}, we prove Conjecture \ref{conj:Int2} for varieties having nilpotent fundamental group.

\section*{Acknowledgement} 
The first author was supported by a postdoctoral fellowship, Estancias Posdoctorales por México 2025, from the Secretaría de Ciencia, Humanidades, Tecnología e Innovación (SECIHTI), Mexico.

\section{Preliminaries}\label{S:Prem}

\subsection{Real setting}
Many things inherent to complex algebraic and complex analytic geometry can be done at the level of the real numbers, but this is tricky. Generally speaking, a real analytic space (or algebraic variety) can be one of two things: 
\begin{enumerate}
    \item a scheme over $\mathbb{R}$ with some extra characteristics, or 
    \item a complex analytic space together with a particular involution
\end{enumerate}

For a technically accurate (but  complicated) theory of real analytic spaces seen as quotients of  complex analytic spaces (their complexification, considered in the category of locally ringed spaces), see \cite[\S1.5]{VB1}. Before taking this quotient, real analytic spaces can be defined as pairs $(X,\sigma)$ consisting of a complex analytic space $X=(X,\mathcal{O}_X)$ together with an anti-holomorphic involution $\sigma:X\xrightarrow[]{}X$. This includes real algebraic varieties, seen as schemes over $\mathbb{R}$. The real locus of the pair $(X,\sigma)$ is $X(\R) := \{x \in X \mid \sigma(x) = x\}$.

At the level of topology, there is no problem: the topological space is the quotient $X/\sigma$. The problem is how to define correctly the sheaves and the morphisms: the structure sheaf is $(\pi_*\mathcal{O}_X)^{\langle\sigma\rangle}$, where $\pi:X\xrightarrow[]{}X/\langle\sigma\rangle$ is the quotient map, and the morphisms $X/\langle\sigma_X\rangle\xrightarrow[]{}Y/\langle\sigma_Y\rangle$ are morphism of locally ringed spaces, but once again, these can be specified as equivariant morphisms $X\xrightarrow[]{}Y$. 

For curves, we are covered by \cite[\S3]{GH}. A real algebraic curve $Y=(Y,\mathcal{O}_Y)$ will be a complete, non-singular, geometrically connected curve of genus $g$ over $\mathbb{R}$, and we denote by $Y(\mathbb{R})$ the set of $\mathbb{R}$-rational points.

The  topology of these objects is well understood. It depends on the three discrete invariants $g,n,a$, where $0\leq n\leq g+1$ is the number of connected components of $Y(\mathbb{R})$, and $a\in\{0,1\}$.

\begin{prop}\label{prop:rscover}
    Let $X$ be a complex manifold equipped with a real structure, that is, an antiholomorphic involution $\sigma: X \to X$. Let $\pi: \univ{X} \to X$ be the universal covering map.  If the real locus $X(\R) = \{x \in X \mid \sigma(x) = x\}$ is non-empty,  then $\univ{X}$ admits a canonical real structure $\univ{\sigma}: \univ{X} \to \univ{X}$ such that $\pi \circ \univ{\sigma} = \sigma \circ \pi$.
\end{prop}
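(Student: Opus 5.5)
The plan is to build $\univ{\sigma}$ by the lifting criterion for covering spaces, using a fixed real point as basepoint. Fix $x_0 \in X(\R)$ and a point $\tilde{x}_0 \in \pi^{-1}(x_0)$. Consider the continuous map $\sigma\circ\pi : \univ{X} \to X$. Since $\univ{X}$ is simply connected and locally path connected, the lifting criterion gives a unique continuous lift $\univ{\sigma}: \univ{X}\to\univ{X}$ with $\pi\circ\univ{\sigma} = \sigma\circ\pi$ and $\univ{\sigma}(\tilde{x}_0)=\tilde{x}_0$. The basepoint condition makes sense precisely because $\sigma(x_0)=x_0$, so $\tilde{x}_0$ lies in the fiber over $\sigma(\pi(\tilde{x}_0))$. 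This is exactly where the hypothesis $X(\R)\neq\varnothing$ enters.

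Next I verify the required properties.

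\emph{Antiholomorphy.} The map $\pi$ is a local biholomorphism, so locally $\univ{\sigma} = (\pi|_V)^{-1}\circ\sigma\circ\pi|_U$ on small open sets $U, V$. Hence $\univ{\sigma}$ is antiholomorphic.

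\emph{Involution.} The map $\univ{\sigma}^2$ is a lift of $\sigma^2\circ\pi=\pi$, so it is a deck transformation. It fixes $\tilde{x}_0$, and a deck transformation of a connected cover with a fixed point is the identity. So $\univ{\sigma}^2=\mathrm{id}$. In particular $\univ{\sigma}$ is a homeomorphism, hence an antiholomorphic involution.

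\emph{Canonicity.} The word ``canonical'' needs care, and this is the one subtle point I expect. The construction depends on the choice of $\tilde{x}_0$ and of $x_0$. I would show the following.
\begin{itemize}
\item Changing $\tilde{x}_0$ within the fiber over the same $x_0$ by a deck transformation $\gamma$ replaces $\univ{\sigma}$ by $\gamma\univ{\sigma}\gamma^{-1}$. This is checked by uniqueness of lifts, since $\gamma\univ{\sigma}\gamma^{-1}$ also lifts $\sigma\circ\pi$ and fixes $\gamma\tilde{x}_0$.
\item Moving $x_0$ inside the same connected component of $X(\R)$ gives the same lift. Take a path $c$ in $X(\R)$ from $x_0$ to $x_1$, with lift $\tilde{c}$ starting at $\tilde{x}_0$. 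Then $\univ{\sigma}\circ\tilde{c}$ and $\tilde{c}$ both lift $\sigma\circ c=c$ and both start at $\tilde{x}_0$. By uniqueness of path lifting they agree, so $\univ{\sigma}$ fixes $\tilde{c}(1)$.
\end{itemize}
So the real structure is canonical up to conjugation by deck transformations, equivalently up to the choice of a real basepoint. That is the sense I would state.

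The main obstacle is only this bookkeeping of what ``canonical'' means; the analytic part is routine.
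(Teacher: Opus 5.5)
Your proof is correct and is essentially the paper's argument: lift $\sigma\circ\pi$ to the unique map fixing a point over a real basepoint, deduce $\univ{\sigma}^2=\mathrm{id}$ from uniqueness of lifts, and read off antiholomorphy from the local expression through sheets of $\pi$. Your canonicity discussion goes beyond the paper, which never justifies the word, and your two bullets are right, but they only give independence up to deck conjugation once a connected component of $X(\R)$ is fixed---lifts attached to different components need not be conjugate (for a real hyperbolic curve with several real components, each such lift is a reflection of $\mathbb{H}$ whose axis projects into a single component), so your closing summary should say that.
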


\begin{proof}
    Because the real locus is non-empty, we may fix a basepoint $x \in X(\R)$ such that $\sigma(x) = x$. Choose a point $\univ{x} \in \univ{X}$ in the fiber $\pi^{-1}(x)$. 
    
    Consider the continuous map $\sigma \circ \pi: \univ{X} \to X$. Because the domain $\univ{X}$ is simply connected, the universal property of covering spaces guarantees the existence of a unique continuous lift $\univ{\sigma}: \univ{X} \to \univ{X}$ such that $\pi \circ \univ{\sigma} = \sigma \circ \pi$ and $\univ{\sigma}(\univ{x}) = \univ{x}$.

    We first verify that $\univ{\sigma}$ is an involution. Consider the composition $\univ{\sigma}^2 = \univ{\sigma} \circ \univ{\sigma}: \univ{X} \to \univ{X}$. Since $\pi \circ \univ{\sigma}^2 = \sigma^2 \circ \pi = \pi$, the map $\univ{\sigma}^2$ is a lift of the identity map $\text{id}_X$. Furthermore, $\univ{\sigma}^2(\univ{x}) = \univ{\sigma}(\univ{x}) = \univ{x}$. Since the identity map $\text{id}_{\univ{X}}$ is also a lift of $\text{id}_X$ fixing the basepoint $\univ{x}$, the uniqueness of lifts implies that $\univ{\sigma}^2 = \text{id}_{\univ{X}}$.
    
    Next, we establish the regularity and antiholomorphicity of the lift. Because $\pi$ is a covering map between complex manifolds, it is a local biholomorphism. For any point $\univ{z} \in \univ{X}$, there exist open neighborhoods $U \ni \univ{z}$ and $V \ni \pi(\univ{z})$ such that $\pi|_U: U \to V$ is a biholomorphism. Let $\pi^{-1}_{loc}: V \to U$ denote its holomorphic inverse. 
    
    In a sufficiently small neighborhood of $\univ{z}$, we can express the lift as the composition:
    \[
        \univ{\sigma} = \pi^{-1}_{loc} \circ \sigma \circ \pi.
    \]
    The map $\pi$ is holomorphic, $\sigma$ is antiholomorphic, and $\pi^{-1}_{loc}$ is holomorphic. The composition of smooth maps is smooth, which proves that $\univ{\sigma}$ is smooth. Furthermore, the composition of two holomorphic maps and one antiholomorphic map is antiholomorphic. Therefore, $\univ{\sigma}$ is an antiholomorphic involution, defining a real structure on $\univ{X}$.
\end{proof}

It may happen that a pair $(X,\sigma_X)$  does not have real points, i.e. $X(\mathbb{R})=\varnothing$. In this case the involution $\sigma_X$ constructed in Proposition \ref{prop:rscover} lifts only to an antiholomorphic automorphism of $\tilde{X}$ whose square is a deck transformation.

\subsection{Plurisubharmonicity}\label{ss:psh}
All complex spaces are assumed to be reduced and paracompact.

\begin{defn} Let $X$ be a complex manifold. A function $\psi:X\to [-\infty, \infty)$ is called \emph{plurisubharmonic (psh)} if it satisfies the following conditions:
\begin{enumerate}
    \item $\psi$ is upper semi-continuous, 
    \item $\psi$ is not identically $-\infty$ on any connected component of $X$, 
    \item For every holomorphic map $\gamma:\Delta\to X$ from the unit disk $\Delta\subset \mathbb{C}$, the composition $\psi\circ\gamma:\Delta\to [-\infty, \infty)$ is subharmonic.
\end{enumerate}
\end{defn}

Let $(z_1,\ldots, z_n)$ be local holomorphic coordinates on $X$. If $\psi$ is of class $\mathcal{C}^2$, the \emph{complex Hessian matrix} of $\psi$ is the Hermitian matrix given by 
$$H(\psi)=\left( \frac{\partial^2 \psi}{\partial z_j \partial \bar{z}_k} \right)_{1\leq j,k\leq n}.  $$
The function $\psi$ is plurisubharmonic if and only if $H(\psi)$ is positive semi-definite at every point. It is called \emph{strictly} plurisubharmonic if $H(\psi)$ is strictly positive definite everywhere,  see \cite{FG}.

To define holomorphic convexity in complex spaces, we follow \cite{N62}.

\begin{defn}
    Let $V$ be a complex space. A function $\psi:V\to \mathbb{R}$ is (\emph{strictly}) \emph{plurisubharmonic} if for every point $p\in V$, there exists an open neighborhood $U\subset V$ of $p$, a closed holomorphic embedding $i:U\hookrightarrow W$ into an open set $W\subset \mathbb{C}^N$, and a smooth, (strictly) plurisubharmonic function $\Psi\in \mathcal{C}^\infty(W)$ such that $\psi|_U=\Psi \circ i$.
\end{defn}

By \cite[Theorem II]{N62}, a complex space is Stein if and only if it admits a continuous, strictly plurisubharmonic proper exhaustion function. Next, following Cartan \cite{C60} and \cite[Section 12]{BBT},  a complex analytic space is holomorphically convex if and only if it admits a proper holomorphic map to a Stein space.

\subsection{Real structures and Dianalytic spaces}\label{ss:dianalytic}
Here we give the analogous definitions of Section \ref{ss:psh} but taking into account the real structure. 
\begin{defn}\label{defn:realHolConv}
    Let $X$ be a complex analytic space equipped with an antiholomorphic involution $\sigma_X:X\to X$. We say that $X$ is \emph{real holomorphically convex} if it exists a Stein space $Y$ together with an antiholomorphic involution $\sigma_Y:Y\to Y$ and an equivariant proper holomorphic map $f:X\to Y$.  
\end{defn}

It may look at first sight that we could define a \emph{real Stein space} as a pair $(X,\sigma)$ with $X$ a complex Stein space and $\sigma$ an antiholomorphic involution such that its strictly plurisubharmonic proper exhaustion function is invariant under $\sigma$. However, it turns out that we can always construct such an invariant function using the following Lemma.

\begin{lem}\label{lem:SteinInva}
    Let $\sigma:X\to Y$ be an antiholomorphic diffeomorphism between complex manifolds, and let $\psi:Y\to\mathbb{R}$ be a $\mathcal{C}^2$ (strictly) plurisubharmonic function.  Then the pull-back by $u=\psi\circ \sigma$ is also a (strictly) plurisubharmonic function.
    \end{lem}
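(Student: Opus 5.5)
The plan is a direct local computation with the complex Hessian, which by \S\ref{ss:psh} characterizes (strict) plurisubharmonicity for $\mathcal{C}^2$ functions. The statement is local and $\sigma$ is a diffeomorphism, so $u=\psi\circ\sigma$ is again $\mathcal{C}^2$. It therefore suffices to check that the Hessian of $u$ at each point $p\in X$ is positive semi-definite, and positive definite when $\psi$ is strictly plurisubharmonic.

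First we fix coordinates. Choose holomorphic coordinates $z=(z_1,\dots,z_n)$ near $p$ and $w=(w_1,\dots,w_n)$ near $\sigma(p)$. Antiholomorphicity of $\sigma$ means that in these coordinates $w=g(\bar z)$ for a holomorphic map $g$. Since $\sigma$ is a diffeomorphism, $g$ is a local biholomorphism, so its Jacobian $J=(\partial g_j/\partial \zeta_k)$ is invertible.

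Next we compute with the chain rule. Because $w_j$ depends only on $\bar z$, we have $\partial w_j/\partial z_k=0$ and $\partial \bar w_j/\partial z_k=\overline{\partial w_j/\partial \bar z_k}$. Hence
\[
\frac{\partial u}{\partial z_k}=\sum_j \frac{\partial\psi}{\partial \bar w_j}\,\overline{J_{jk}}.
\]
Applying $\partial/\partial\bar z_l$ and using that $\overline{J_{jk}}$ is antiholomorphic in $\bar z$, so its $\partial/\partial \bar z_l$ derivative vanishes, gives
\[
\frac{\partial^2 u}{\partial z_k\partial\bar z_l}=\sum_{i,j}\frac{\partial^2\psi}{\partial w_i\partial\bar w_j}\,J_{il}\,\overline{J_{jk}}.
\]
In matrix form this reads $H(u)=\big(J^{*}H(\psi)J\big)^{T}=\overline{J^{*}H(\psi)J}$, where the last equality holds because the matrix is Hermitian. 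Thus $H(u)$ is the complex conjugate of a congruence of $H(\psi)$ by an invertible matrix.

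Finally we conclude. For any $v\in\mathbb{C}^n$,
\[
v^{*}H(u)v=\overline{(\bar v)^{*}J^{*}H(\psi)J\bar v}=(J\bar v)^{*}H(\psi)(J\bar v)\ge 0,
\]
where the conjugate can be dropped because the quantity is real. Equality forces $J\bar v=0$ in the strict case, and invertibility of $J$ then gives $v=0$. So (strict) positivity of $H(\psi)$ passes to $H(u)$.

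The only delicate point is the index and conjugation bookkeeping in the chain rule, namely keeping track of which Wirtinger derivatives vanish. A cleaner way to organize this is to observe that $\sigma$ is a holomorphic map $X\to\overline{Y}$, where $\overline{Y}$ is $Y$ with the conjugate complex structure. Pullback under holomorphic maps preserves plurisubharmonicity, so it remains only to check that $\psi$ is (strictly) psh on $\overline{Y}$. This holds because in conjugate coordinates $\bar w$ the Hessian of $\psi$ is $\overline{H(\psi)}$, which is positive (definite) exactly when $H(\psi)$ is.
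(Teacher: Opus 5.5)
Your proposal is correct and follows the same route as the paper: a local chain-rule computation showing that $H(u)$ is the complex conjugate of a congruence of $H(\psi)$ by an invertible Jacobian, followed by the same linear-algebra conclusion. There is one small slip in the matrix form: your (correct) index formula gives $H(u)=J^{*}\overline{H(\psi)}J=\overline{\bar J^{*}H(\psi)\bar J}$, not $\big(J^{*}H(\psi)J\big)^{T}$ (check with $\psi=|w|^2$, $w=J\bar z$, which gives $H(u)=J^{*}J$), but the positivity argument goes through unchanged with $\bar J\bar v$ in place of $J\bar v$.
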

\begin{proof}
    Let $z=(z_1,\ldots, z_n)$ be local holomorphic coordinates on $X$ and $w=(w_1,\ldots, w_n)$ on $Y$. As $\sigma(z)=w$ is antiholomorphic, it satisfies $\frac{\partial w_j}{\partial z_k}=0$. By the complex chain rule, the complex Hessian of $u$ is given by: 
    $$\frac{\partial^2 u}{\partial z_k \partial \bar{z_l}}=\sum_{j,m} \frac{\partial^2 \psi}{\partial w_j \partial \bar{w}_m} \frac{\partial w_j}{\partial \bar{z_l}}\frac{\partial \bar{w}_m}{\partial z_k}. $$
    Since $\sigma$ is antiholomorphic, the term $\frac{\partial w_j}{\partial \bar{z}_l}$ is  the complex conjugate of $\frac{\partial \bar{w}_j }{\partial z_l}$. Let $J$ be the invertible Jacobian matrix of the conjugate map $\bar{\sigma}$, with entries $J_{mk}=\frac{\partial \bar{w}_m}{\partial z_k}$.
    We rewrite the above equation as matrices, using the notation of section \ref{ss:psh}:
    $$H(u)=J^T \overline{H(\psi)} \bar{ J}$$
    As $\psi$ is real-valued, its complex Hessian $H(\psi)$ is a Hermitian matrix, and its complex conjugate $\overline{H(\psi)}=H(\psi)^T$ is also Hermitian. We conclude by using linear algebra: the complex conjugate of a positive (semi-)definite Hermitian matrix is positive (semi-)definite and because $J$ is invertible, $H(u)$ is also positive (semi-)definite. 
\end{proof}

Using this, we can always construct an invariant psh function on a Stein manifold. We sketch an argument for analytic Stein spaces in the proof of Theorem \ref{thm:NilNonEmpty}.

\begin{defn}
    A \emph{dianalytic space} is a topological space $X$ equipped with an atlas of charts mapping into $\mathbb{C}^n$ such that all transition functions are either holomorphic or antiholomorphic.
\end{defn}
If $X$ has dimension one, it is also called a Klein surface.

\begin{defn}
    A \emph{dianalytic Stein space} is a pair $(Y,\sigma_Y)$ consisting of a complex space $Y$ and a properly discontinuous, fixed-point-free antiholomorphic automorphism $\sigma_Y$, such that $Y$ admits a continuous, strictly plurisubharmonic function that is invariant under $\sigma_Y$ and descends to a proper exhaustion function on the quotient space $Y/\langle \sigma_Y \rangle$.
\end{defn}

\begin{defn}
    Let $X$ be a complex analytic space equipped with a properly discontinuous fixed-point-free antiholomorphic automorphism $\sigma_X$. We say that $X$ is \emph{dianalytic holomorphically convex} with respect to $\sigma_X$, if it admits an equivariant proper holomorphic map $f:X\to Y$ to a dianalytic Stein space $(Y,\sigma_Y)$.
\end{defn}

\subsection{Real Shafarevich conjecture}
We are now ready to formulate our generalization of the Shafarevich conjecture to the real setting. 

\begin{conj}
    Let $X$ be a smooth projective variety defined over $\mathbb{R}$. Let $\tilde{X}$ be its universal cover and $\tilde{\sigma}_X$ a properly discontinuous lift of its real structure. If $X(\mathbb{R})\not = \emptyset$, then $\tilde{X}$ is real holomorphically convex. If $X(\mathbb{R})=\emptyset$, then $\tilde{X}$ is dianalytic holomorphically convex.
\end{conj}

\subsection{Metric geometry of semisimple isometries}\label{ss:Metric geometry}

We conclude this section with definitions characterizing the geometry of the lifted real structure when the variety lacks real points. In this setting, we rely on the geometry of non-positive curvature (CAT(0) spaces), which encompasses both the hyperbolic geometry of curves and the flat geometry of the Albanese covers.

\begin{defn}
    Let $(M,d)$ be a metric space and $f:M\to M$ an isometry. The \emph{displacement function} of $f$ is $\delta_f(x)=d(x,f(x))$. The \emph{Min-Set} of $f$ is the locus where this displacement is minimized:
    $$\text{Min}(f)=\left\lbrace x\in M \mid \delta_f(x) = \inf_{y\in M} \delta_f(y) \right\rbrace. $$
\end{defn}

An isometry is called \emph{semisimple} if the infimum of the displacement function is attained (i.e., $\text{Min}(f) \neq \emptyset$). If the infimum is not attained, the isometry is called \emph{parabolic}.

\begin{thm}[{\cite[6.2]{BH99}}]
    Let $M$ be a simply connected, complete geodesic metric space of non-positive curvature (a CAT(0) space). If $f$ is a semisimple isometry, then $\text{Min}(f)$ is a non-empty, closed, convex, totally geodesic subspace of $M$. Moreover, $\text{Min}(f)$ splits isometrically as a product $Y \times \mathbb{R}$, where $f$ acts as a translation on the $\mathbb{R}$ factor.
\end{thm}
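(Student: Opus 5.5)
The plan is to follow the standard argument for the flat strip and splitting theorems in CAT(0) geometry, as in \cite[II.6]{BH99}. Set $a=\min\delta_f$, which exists by semisimplicity. If $a=0$ then $\text{Min}(f)$ is the fixed-point set of $f$; it is closed and convex because geodesics are unique in a CAT(0) space, and it is the degenerate case of the product statement. I will assume $a>0$, which is the case relevant for the fixed-point-free lift $\tilde{\sigma}_X$.

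The first step is closedness and convexity of $\text{Min}(f)$. The displacement function $\delta_f$ is continuous, since it is $2$-Lipschitz. It is also convex: for geodesics $c,c'$ parametrised proportionally to arc length, the map $t\mapsto d(c(t),c'(t))$ is convex by the CAT(0) comparison inequality. Applying this with $c'=f\circ c$, which is again a geodesic because $f$ is an isometry, shows $\delta_f$ is convex along geodesics. A sublevel set of a continuous convex function is closed and convex, and $\text{Min}(f)=\{\delta_f\le a\}$.

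The second step produces axes. For $x\in\text{Min}(f)$, let $m$ be the midpoint of $[x,f(x)]$. Then $f(m)$ is the midpoint of $[f(x),f^2(x)]$, so $d(m,f(m))\le a$ by the triangle inequality, with equality forced by minimality. Equality in the triangle inequality through $f(x)$ means that $x,f(x),f^2(x)$ lie on a geodesic, by uniqueness of geodesics. Iterating this shows that the union $\bigcup_n f^n([x,f(x)])$ is a geodesic line, on which $f$ acts by translation of length $a$. So every point of $\text{Min}(f)$ lies on an $f$-axis, and each axis is contained in $\text{Min}(f)$.

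The third step, the main obstacle, is the splitting. Any two axes $c,c'$ are asymptotic, since $d(c(t),c'(t))$ is bounded by periodicity. Because this function is convex and bounded on $\mathbb{R}$, it is constant, so the two axes are parallel. The flat strip theorem \cite[II.2.13]{BH99} then says that the convex hull of $c\cup c'$ is isometric to a flat strip $[0,D]\times\mathbb{R}$. I will fix one axis $c_0$ and let $Y$ be the set of points $y\in\text{Min}(f)$ whose projection onto $c_0$ lands at $c_0(0)$, equivalently a level set of the Busemann function of $c_0$ restricted to $\text{Min}(f)$. The flat strips then give a map $Y\times\mathbb{R}\to\text{Min}(f)$, sending $(y,t)$ to the point at parameter $t$ on the axis through $y$.

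Verifying that this map is an isometry is the delicate point, and it goes through the product decomposition theorem for a convex set filled by parallel lines \cite[II.2.14]{BH99}. In that map $f$ acts as $(y,t)\mapsto(y,t+a)$, which is the claimed translation on the $\mathbb{R}$ factor. Finally, $\text{Min}(f)$ is totally geodesic because it is convex: geodesics between its points stay inside it.
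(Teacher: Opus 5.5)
There is one genuine problem: your treatment of $a=0$. It is not a degenerate case of the product statement, because the splitting is false for elliptic isometries. If $f$ is a rotation of the hyperbolic plane about a point $p$, then $\text{Min}(f)=\{p\}$. This set is bounded, so it is not isometric to any $Y\times\mathbb{R}$.

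So the statement as written holds only for axial $f$. In \cite{BH99}, the closedness and convexity of $\text{Min}(f)$ is II.6.2. The splitting is II.6.8, whose hypothesis is that $f$ is hyperbolic. You should say explicitly that the first sentence holds for every semisimple $f$, while the product decomposition requires $\inf\delta_f>0$. Claiming the elliptic case is covered is incorrect. The restriction costs nothing where the paper uses the result: a semisimple isometry without fixed points automatically has $\inf\delta_f>0$, since the infimum is attained.

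For $a>0$, your Steps 1--3 are essentially the argument of \cite[II.6.2, II.6.8]{BH99} that the paper cites: convexity of $\delta_f$, the midpoint construction of axes, parallelism of axes via periodicity plus convexity, then the flat strip theorem and \cite[II.2.14]{BH99}.

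One justification needs repair. Equality in $d(m,f(m))\le d(m,f(x))+d(f(x),f(m))$ only shows that the concatenation of $[x,f(x)]$ and $[f(x),f^2(x)]$ is geodesic near $f(x)$. Hence $\bigcup_n f^n([x,f(x)])$ is a local geodesic. To conclude that it is a geodesic line, you need the fact that local geodesics in a CAT(0) space are geodesics \cite[II.1.4]{BH99}; uniqueness of geodesics alone does not give this.
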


In the context of smooth projective varieties, the action of the fundamental group on the universal cover is cocompact. By \cite[6.10]{BH99}, cocompact actions on CAT(0) spaces consist entirely of semisimple isometries. Consequently, the lifts of real structures in our setting always possess a non-empty Min-Set.

\begin{defn}
    A semisimple isometry $f$ is called \emph{axial} (or hyperbolic) if $\inf \delta_f > 0$. In the specific case where $f$ is an orientation-reversing isometry of a Riemannian manifold (such as a lifted real structure), we refer to $f$ as a \emph{glide reflection}. Geometrically, its action restricts to a translation along the axes in $\text{Min}(f)$ composed with an orthogonal reflection/rotation that preserves $\text{Min}(f)$.
\end{defn}

\section{Real curves}

\subsection{Non-empty real locus}

We begin by verifying Conjecture \ref{conj:Int1} for curves. In this dimension, the universal cover is already a Stein manifold ($\mathbb{H}$). While Lemma \ref{lem:SteinInva} guarantees the existence of some invariant exhaustion function by averaging, we show below that the canonical hyperbolic geometry provides a natural one.
\begin{prop}\label{thm:CurvesNonEmpty}
    Let $C$ be a smooth projective curve defined over the reals with $C(\mathbb{R}) \neq \emptyset$ and genus $g\geq 2$. Then its universal cover $\tilde{C}$ is real holomorphically convex.
\end{prop}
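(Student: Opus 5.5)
The plan is to take $\tilde{Y}=\tilde{C}$ and $f=\mathrm{id}$. Then the only thing to produce is a continuous, strictly plurisubharmonic proper exhaustion function on $\tilde{C}$ that is invariant under the lifted real structure $\tilde{\sigma}$ of Proposition \ref{prop:rscover}. Since $C(\mathbb{R})\neq\emptyset$, that proposition gives an antiholomorphic involution $\tilde{\sigma}$ on $\tilde{C}$ together with a fixed point $\tilde{x}$ lying over a real point $x\in C(\mathbb{R})$.

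Because $g\geq 2$, uniformization identifies $\tilde{C}$ with the unit disc $\Delta$, equivalently $\mathbb{H}$, and the identification can be chosen so that $\tilde{x}\mapsto 0$. The disc is Stein. The involution $\tilde{\sigma}$ is antiholomorphic, so it is an orientation-reversing isometry of the Poincaré metric, since anti-biholomorphisms of $\Delta$ preserve that metric. It also fixes $0$, so it is a hyperbolic reflection through a geodesic passing through $0$. After a rotation we may assume $\tilde{\sigma}(z)=\bar z$.

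For the exhaustion we take the natural choice $\psi(z)=|z|^2$. Equivalently, one can take a function of the hyperbolic distance to $0$, such as $-\log(1-|z|^2)$. Both functions are smooth and strictly plurisubharmonic, since $\partial\bar\partial|z|^2=1>0$ and $\partial\bar\partial(-\log(1-|z|^2))=(1-|z|^2)^{-2}>0$. Both are proper exhaustions of $\Delta$, and we prefer $-\log(1-|z|^2)$ because its sublevel sets are compact hyperbolic balls. Both are invariant under $z\mapsto\bar z$, because they depend only on $|z|$. This is the canonical invariant function coming from the hyperbolic geometry: $\psi(z)=-\log(1-|z|^2)$ is $2\log\cosh\bigl(d(0,z)/2\bigr)$ up to normalization of the metric. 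As a fallback that does not require normalizing $\tilde\sigma$, one could instead average: $\psi+\psi\circ\tilde{\sigma}$ is strictly psh by Lemma \ref{lem:SteinInva}, it is proper, and it is invariant.

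So the triple $(\tilde{C},\tilde{\sigma})\xrightarrow{\mathrm{id}}(\tilde{C},\tilde{\sigma})$ satisfies Definition \ref{defn:realHolConv}, and the result follows. The only step that needs care is checking that $\tilde{\sigma}$ is a hyperbolic isometry fixing a point, which is what allows it to be conjugated to $z\mapsto\bar z$. To check this, compose $\tilde{\sigma}$ with complex conjugation. The result is a holomorphic automorphism of $\Delta$, so it lies in $\mathrm{PSU}(1,1)$ and is an isometry, and conjugation is itself an isometry. The fixed point at $0$ then forces $\tilde{\sigma}(z)=e^{i\theta}\bar z$, and the rotation by $e^{i\theta/2}$ puts it in the form $z\mapsto\bar z$. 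No part of the argument is genuinely hard; the invariance is automatic once the fixed point is placed at the origin.
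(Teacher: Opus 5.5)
Your proposal is correct and follows essentially the paper's route. You take $f=\mathrm{id}$ on $\tilde{C}\cong\Delta$, normalize $\tilde{\sigma}$ to $z\mapsto\bar z$ with the lifted real point at $0$, and use a radial function of the hyperbolic distance to $0$. Your $-\log(1-|z|^2)=2\log\cosh(d(0,z)/2)$ plays the role of the paper's $d^2(z,0)$, and you check strict plurisubharmonicity by direct computation rather than by convexity of distance functions.

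One slip to fix: $|z|^2$ is bounded on $\Delta$, so it is not a proper exhaustion, since its sublevel set $\{|z|^2\le 1\}$ is all of $\Delta$. It is therefore not interchangeable with $-\log(1-|z|^2)$. Only the latter, or the paper's $d^2(z,0)$, works, and your averaging fallback must likewise start from a genuine exhaustion.
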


\begin{proof}
    Let $\sigma$ denote the antiholomorphic involution on $C$. Fix a basepoint $x\in C(\mathbb{R})$ to lift the real structure to an involution $\tilde{\sigma}$ on $\tilde{C}$. By the Uniformization Theorem, $\tilde{C}$ is biholomorphic to the unit disk $\Delta$ (or $\mathbb{H}$), which is a Stein manifold.
    
    To show that the pair $(\tilde{C}, \tilde{\sigma}) $ satisfies Definition \ref{defn:realHolConv}, we can take the target space $Y = \tilde{C}$, the map $f$ as the identity, and $\sigma_Y = \tilde{\sigma}$. We only need to exhibit a strictly plurisubharmonic proper exhaustion function invariant under $\tilde{\sigma}$.

    Equip $\tilde{C}$ with the Poincaré metric. Since this metric is unique, the antiholomorphic diffeomorphism $\tilde{\sigma}$ acts as an isometry. Because it fixes the lift $\tilde{x}$ of the real basepoint, $\tilde{\sigma}$ is a reflection (an elliptic isometry). Choosing coordinates where $\tilde{x}=0$ (in the disk model), $\tilde{\sigma}$ acts as complex conjugation $\tilde{\sigma}(z)=\bar{z}$.

    Consider the squared hyperbolic distance function $\psi(z) := d_{\mathbb{H}}^2(z, 0)$. Being a distance function in a negatively curved space, it is strictly convex and thus strictly subharmonic. Moreover, since $\tilde{\sigma}$ is an isometry fixing $0$, the function is invariant:
    \[ \psi(\tilde{\sigma}(z)) = d^2(\tilde{\sigma}(z), \tilde{\sigma}(0)) = d^2(z, 0) = \psi(z). \]
    
    Thus, $\psi$ serves as the required invariant exhaustion function.
\end{proof}

\subsection{Empty real locus}

\begin{thm}\label{thm:curvesEmpty}
    Let $C$ be a smooth projective curve defined over the reals with empty real locus $C(\mathbb{R})=\emptyset$ and genus $g\geq 2$. Then the pair $(\tilde{C}, \tilde{\sigma})$ is a dianalytic Stein space.
\end{thm}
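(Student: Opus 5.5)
Uniformization identifies $\tilde{C}$ with the unit disk $\Delta$ carrying its Poincaré metric. The real structure $\sigma$ lifts to an antiholomorphic automorphism $\tilde{\sigma}$ of $\Delta$ with $\pi\circ\tilde{\sigma}=\sigma\circ\pi$; it is obtained as in Proposition \ref{prop:rscover}, but sending a chosen lift of $x$ to a chosen lift of $\sigma(x)$. Since the Poincaré metric is unique, $\tilde{\sigma}$ is an orientation-reversing isometry of $\Delta$. It has no fixed points, because a fixed point would project to a point of $C(\mathbb{R})=\emptyset$. The square $\tilde{\sigma}^2$ is a deck transformation, and it is nontrivial: otherwise $\tilde{\sigma}$ would be an orientation-reversing isometric involution of $\Delta$, i.e. a reflection in a geodesic, which has fixed points. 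The group $\Gamma'=\langle \pi_1(C),\tilde{\sigma}\rangle$ acts freely and properly discontinuously on $\Delta$, since it is the deck group of the Klein surface $C/\langle\sigma\rangle$. In particular $\langle\tilde{\sigma}\rangle$ acts properly discontinuously.

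Next I classify $\tilde{\sigma}$. The group $\Gamma'$ is cocompact, so by \cite[6.10]{BH99} $\tilde{\sigma}$ is semisimple. Being fixed-point-free, it is axial, i.e. a glide reflection. Its Min-Set is a single geodesic $\ell$, along which $\tilde{\sigma}$ translates by $\tau=\min\delta_{\tilde{\sigma}}>0$ while swapping the two sides of $\ell$. In the upper half-plane model I normalize $\ell$ to the imaginary axis, so that
\[
\tilde{\sigma}(z)=-\lambda\bar{z},\qquad \lambda=e^{\tau}>1 .
\]
The quotient $\Delta/\langle\tilde{\sigma}\rangle$ is then an open Möbius band, a dianalytic surface whose core circle $\ell/\langle\tilde{\sigma}\rangle$ is compact.

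For the exhaustion function I take $\psi=\phi(d(\cdot,\ell))+\chi$, with two pieces.

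\textbf{The distance term.} The function $d(\cdot,\ell)$ depends only on the argument of $z$ and is $\tilde{\sigma}$-invariant. Its sublevel sets are invariant tubular neighbourhoods of $\ell$, and their images in the quotient are not compact, since in the quotient only the direction transverse to $\ell$ has been handled. A quick check gives $d(z,\ell)=\operatorname{arcsinh}(|\operatorname{Re} z|/\operatorname{Im} z)$.

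\textbf{The along-axis term.} To control the direction along $\ell$, I add an invariant function $\chi$ of $\log|z|$. Invariance under $\tilde{\sigma}$ means $\chi(\log|z|+\tau)=\chi(\log|z|)$, so $\chi$ must be periodic in $\log|z|$, which is a coordinate on the circle $\ell/\langle\tilde{\sigma}\rangle$. A periodic function is automatically bounded, so the descended $\psi$ is proper exactly when $\phi(d(\cdot,\ell))\to\infty$. Properness therefore comes for free once $\phi$ is increasing and unbounded.

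\textbf{Main obstacle: strict subharmonicity.} The hard step is making $\psi$ strictly subharmonic everywhere, including along the compact core where $d(\cdot,\ell)$ is small. I work in the coordinate $w=\log z=s+i\theta$ with $\theta\in(0,\pi)$, which conformally identifies $\mathbb{H}$ with a strip. In these coordinates $\tilde{\sigma}$ becomes $(s,\theta)\mapsto(s+\tau,\pi-\theta)$, and $d(\cdot,\ell)$ is a function of $\theta$ alone that blows up at $\theta=0,\pi$. The Laplacian is just $\partial_s^2+\partial_\theta^2$. I therefore try $\psi=g(\theta)+\epsilon\cos(2\pi s/\tau)$, with $g$ symmetric under $\theta\mapsto\pi-\theta$ and tending to $\infty$ at the ends. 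The trouble is that the periodic term has negative Laplacian where $\cos(2\pi s/\tau)>0$. To compensate I need $g''>\epsilon(2\pi/\tau)^2$ everywhere. One choice is $g(\theta)=-\log\sin\theta+A(\theta-\pi/2)^2$ with $A$ large, noting that $-\log\sin\theta$ already has $g''=1/\sin^2\theta\ge1$. However, $\psi$ is not invariant: $\cos(2\pi(s+\tau)/\tau)=\cos(2\pi s/\tau)$, but $\theta$ flips, and $g$ is symmetric, so it is in fact invariant. If the periodic term is unnecessary, I simply take $\psi=g(\theta)$, whose sublevel sets are $\{\theta\in[a,\pi-a]\}$. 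These descend to compact annular strips of the Möbius band, since $s$ is taken modulo $\tau$ and $\theta$ is bounded. So $g(\theta)$ alone is a strictly subharmonic, $\tilde{\sigma}$-invariant function descending to a proper exhaustion on $\Delta/\langle\tilde{\sigma}\rangle$. This shows $(\tilde{C},\tilde{\sigma})$ is a dianalytic Stein space.
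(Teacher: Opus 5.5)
Your final argument is correct and is essentially the paper's proof: $\tilde\sigma$ is a glide reflection with axis $\ell$, and an invariant function of $d(\cdot,\ell)$ is strictly subharmonic and descends to a proper exhaustion of the Möbius band. You use $g(\theta)=-\log\sin\theta=\log\cosh d(z,\ell)$ where the paper uses $d^2(z,L)$, and your explicit check $g''=1/\sin^2\theta$ in the strip coordinate is cleaner than the paper's appeal to strict convexity, which is not literally true along the axis, where $d^2(\cdot,L)$ vanishes identically. You should delete the claim in your ``distance term'' paragraph that tubular neighbourhoods of $\ell$ have non-compact image in the quotient. It is false: since $\tilde\sigma$ translates along $\ell$, each such neighbourhood has a compact fundamental domain, exactly as you observe at the end. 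The periodic along-axis term was therefore never needed.
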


\begin{proof}
    As $C(\mathbb{R})=\emptyset$, the antiholomorphic involution $\sigma$ is fixed-point-free. By the universal property of covering spaces, we lift $\sigma$ to an antiholomorphic map $\tilde{\sigma}:\tilde{C}\to \tilde{C}$. Because $\sigma$ is an involution, the square of the lift $\tilde{\sigma}^2$ acts as a deck transformation of $\tilde{C}$.
    
    By the Uniformization Theorem, $\tilde{C} \cong \mathbb{H}$. We equip $\tilde{C}$ with the unique hyperbolic metric of curvature $-1$. Since $\tilde{\sigma}$ is a diffeomorphism preserving the conformal structure (antiholomorphically), the uniqueness of the canonical metric implies that $\tilde{\sigma}$ acts as an orientation-reversing isometry.

    Since $C(\mathbb{R})=\emptyset$, the map $\tilde{\sigma}$ has no fixed points on $\mathbb{H}$. By the classification of isometries of the hyperbolic plane, a fixed-point-free orientation-reversing isometry is a \emph{glide reflection}.
    
    Consequently, $\tilde{\sigma}$ preserves a unique geodesic axis $L\subset \mathbb{H}$. The action decomposes into a reflection across $L$ and a translation along $L$ by distance $l>0$. Choosing coordinates where $L$ is the imaginary axis, the map takes the form:
    $$\tilde{\sigma}(z)=e^l(-\bar{z}). $$

    Consider the transversal exhaustion function defined by the squared hyperbolic distance to the axis $L$:
    $$\phi(z)=d_{\mathbb{H}}^2(z,L).$$
    Because $\tilde{\sigma}$ acts as an isometry preserving the set $L$, $\phi$ is strictly invariant:
    $$\phi(\tilde{\sigma}(z))=d_{\mathbb{H}}^2(\tilde{\sigma}(z),\tilde{\sigma}(L))=d_{\mathbb{H}}^2(z,L)=\phi(z). $$

    The squared distance to a geodesic in $\mathbb{H}$ is a strictly convex function, and therefore strictly subharmonic everywhere.
    
    Finally, we verify the properness on the quotient. The action of $\tilde{\sigma}$ on the axis $L\cong \mathbb{R}$ is a translation by $l$; thus, the quotient $L/\langle \tilde{\sigma} \rangle$ is a compact circle. Consequently, the sublevel sets of the descended function (which are tube neighborhoods of this circle) are compact. Thus, $\phi$ descends to a continuous, strictly subharmonic proper exhaustion function on the Klein surface $\tilde{C}/\langle \tilde{\sigma} \rangle$, proving that $(\tilde{C}, \tilde{\sigma})$ is a dianalytic Stein space.
\end{proof}

\section{Nilpotent covers}

If $G$ is a group, we denote its nilpotent completion by $G^{nilp}$.
If $G=\pi_1(X)$ with $X$ a topological space admiting a universal cover, we denote the cover associated to $G^{nilp}$ by $\tilde{X}^{nilp}$. 
\subsection{Non-empty real locus}
Let $X$ be a smooth projective variety defined over $\mathbb{R}$, and assume that $X(\mathbb{R})\not = 0$. Let $\sigma_X:X\to X$ denote the antiholomorphic involution associated to its real structure. Using a basepoint $x\in X(\mathbb{R})$ for the fundamental group $\pi_1(X,x)$, we lift the real structure of $X$ to an antiholomorphic involution $\tilde{\sigma}_X$ in its universal cover $\tilde{X}$ by Proposition \ref{prop:rscover}.

\begin{thm}\label{thm:NilNonEmpty}
    The Malcev universal nilpotent cover $\tilde{X}^{nilp}$ is real holomorphically convex.
\end{thm}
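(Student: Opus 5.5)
We follow Katzarkov's proof of the Shafarevich conjecture for nilpotent fundamental groups \cite{K97} (as streamlined in \cite{GGK24,AC25}) and check at each stage that the constructions are compatible with the real structures. Katzarkov's theorem yields a proper holomorphic map from $\tilde{X}^{nilp}$ to a Stein space built from the Albanese map and iterated higher Albanese (Malcev) constructions. The skeleton is as follows.

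First, we show that the real structure descends to $\tilde{X}^{nilp}$. Since the lift $\tilde{\sigma}_X$ of Proposition \ref{prop:rscover} fixes $\tilde{x}$, it induces an automorphism $\sigma_*$ of $\pi_1(X,x)$. This automorphism preserves the lower central series, so it passes to the Malcev completion. Hence $\tilde{\sigma}_X$ descends to an antiholomorphic involution on $\tilde{X}^{nilp}=\tilde{X}/\ker(\pi_1\to \pi_1^{nilp})$, and the covering map is equivariant.

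Second, we handle the abelian stage. Using the real basepoint $x$, the Albanese map $\alpha:X\to \Alb(X)$ is defined over $\mathbb{R}$. It intertwines $\sigma_X$ with the real structure on $\Alb(X)$ induced by conjugation on $H^0(X,\Omega^1)^\vee$. Lifting $\alpha$ gives an equivariant holomorphic map $\tilde{X}^{ab}\to H^0(X,\Omega^1)^\vee\cong\mathbb{C}^q$, where the target carries an antilinear involution. Passing to the Stein factorization of the induced map yields an equivariant proper map to a Stein space; the Stein factorization inherits an antiholomorphic involution by functoriality. For the higher nilpotent stages, Katzarkov's construction uses the mixed Hodge structure on the Malcev Lie algebra of $\pi_1(X,x)$. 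With a real basepoint, this is a real mixed Hodge structure equipped with an antiholomorphic involution (de Rham conjugation composed with $\sigma_*$). The period maps and the associated iterated-integral maps $\tilde{X}^{nilp}\to \mathbb{C}^N$ can then be chosen equivariant with respect to an antilinear involution on $\mathbb{C}^N$. Katzarkov shows that the combined map $\tilde{X}^{nilp}\to\mathbb{C}^N$ has Stein factorization $\tilde{X}^{nilp}\to \tilde{Y}$ with $\tilde{Y}$ Stein and the first map proper. Since everything is equivariant, $\tilde{Y}$ carries an antiholomorphic involution $\sigma_Y$ and the map is equivariant, as required by Definition \ref{defn:realHolConv}.

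Third, although Definition \ref{defn:realHolConv} only asks for a Stein target, we also produce an invariant exhaustion. Given a continuous strictly psh proper exhaustion $\psi$ on $\tilde{Y}$, we set $\psi+\psi\circ\sigma_Y$. This function is strictly psh: for smooth local extensions, apply Lemma \ref{lem:SteinInva} to local embeddings $U\hookrightarrow W$, extending $\sigma_Y$ locally to an antiholomorphic map of the ambient $\mathbb{C}^N$ when the embedding is chosen equivariant, which is possible because $\sigma_Y$ comes from the antilinear involution of $\mathbb{C}^N$. The function is also proper, since it dominates $\psi$.

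The main obstacle is the second step in the non-abelian stages. One must verify that Katzarkov's construction, which relies on choices such as the splitting of the Hodge filtration, harmonic representatives, and iterated integrals, can be carried out $\sigma$-equivariantly. Our first approach is to make every choice canonical or to average it over $\langle\sigma\rangle$. The Deligne splitting of a real mixed Hodge structure is canonical and hence compatible with conjugation. The equivariance of the resulting maps then follows because iterated integrals along paths from the real basepoint transform under $\sigma$ by complex conjugation.
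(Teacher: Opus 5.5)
\textbf{Verdict: there is a genuine gap.} Your first and third steps are fine. The kernel of $\pi_1(X,x)\to\pi_1(X,x)^{nilp}$ is characteristic, so $\tilde{\sigma}_X$ descends to an antiholomorphic involution of $\tilde{X}^{nilp}$. Likewise, $\psi+\psi\circ\sigma_Y$ is a legitimate symmetrization. The gap is the non-abelian part of your second step, which you label as the main obstacle and for which you only sketch a plan.

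The plan does not close as written. You use Katzarkov's theorem as a black box that makes a specific iterated-integral map $F:\tilde{X}^{nilp}\to\mathbb{C}^N$ factor properly through a Stein space. At the same time, you propose altering the ingredients of $F$ (representatives, primitives in Chen's iterated integrals, splittings) to make it equivariant. Once $F$ is altered, properness must be re-proved for the new map. The averaging option, replacing $F$ by $\tfrac12(F+\tau\circ F\circ\tilde{\sigma})$, does give an equivariant holomorphic map. However, it can destroy exactly the fibre structure that made the factorization proper. Finally, the claim that every auxiliary choice can be made compatible with $\omega\mapsto\overline{\sigma^*\omega}$ is asserted, not checked.

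\textbf{How the paper avoids this.} The paper has no higher stages to control. By \cite[Thm 9]{AC25}, $\pi_1(X)^{nilp}\cong\pi_1(Z')^{nilp}$ for the Stein factorization $X\xrightarrow{g}Z'\xrightarrow{h}\Alb(X)$ of the Albanese map. Hodge theory enters only in proving this group-theoretic statement, through a strictness argument. Consequences:
\begin{enumerate}
\item $\tilde{X}^{nilp}=X\times_{Z'}\tilde{Z}'^{nilp}$, and $\tilde{g}$ is proper.
\item $\tilde{Z}'^{nilp}$ is Stein, because $\tilde{h}$ maps it finitely onto a closed subvariety of $\mathbb{C}^N\cong\widetilde{\Alb}(X)$.
\item Equivariance is needed only for $\alpha$, $g$, $h$ and for the lift of $\sigma_{Z'}$ at the real point $g(x)$. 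For $\alpha,g,h$ it comes from Galois descent, since Stein factorization commutes with the flat base change $\mathbb{C}/\mathbb{R}$.
\end{enumerate}

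\textbf{How to complete your own route.} No Hodge-theoretic bookkeeping is needed. Once $\tilde{X}^{nilp}$ is known to be holomorphically convex \cite{K97,AC25}, your first step supplies the involution on it. Let $R:\tilde{X}^{nilp}\to Y$ be the Remmert reduction. Then $R\circ\tilde{\sigma}$ is a holomorphic map into the conjugate Stein space $\overline{Y}$. By the universal property of the reduction, it factors as $\tau\circ R$ with $\tau:Y\to Y$ antiholomorphic. Surjectivity of $R$ then forces $\tau^2=\mathrm{id}$. So equivariance comes for free from the canonical reduction, and no choices need to be made compatible with $\sigma$.
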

\begin{proof} The real structure on $X$ induces a real structure $\sigma_A$ of the Albanese variety $\Alb(X):=H^0(\Omega_X^1)^*/H_1(X,\mathbb{Z})$, see \cite[D.6]{FM}. We can see this directly by using the antilinear involution on $H^0(\Omega_X^1)$ given by $\omega \mapsto \overline{\sigma^* \omega}$ or by using the universal property of the Albanese variety and the holomorphic map $X \to \overline{X}$ (where $\overline{X}$ denotes the same differential variety $X$ but with its conjugate complex structure). With this real structure, the Albanese map $\alpha:X\to\Alb(X)$ is equivariant:
\begin{equation}
    \alpha\circ \sigma_X = \sigma_A \circ \alpha.
\end{equation}

From here, we follow the strategy of \cite[Thm 9]{AC25}, taking care of the involutions at every step.

Consider the Stein factorization of the Albanese map
$$X\overset{g}\to Z' \overset{h}{\to} \Alb(X), $$
where $g$ has connected fibers, $h$ is a finite morphism and $Z'$ is a normal projective variety. We want to show that $Z'$ inherits an antiholomorphic involution $\sigma_{Z'}$ making $g$ and $h$ equivariant with respect to the involutions $\sigma$ and $\sigma_A$ respectively. 
This follows from Galois descent. Since $X$ is defined over $\mathbb{R}$, we consider the morphism of schemes over $\mathbb{R}$. Stein factorization commutes with the strictly flat base change $\mathbb{C}/\mathbb{R}$ \cite[Tag 03GY]{SP}. Thus, $Z_\mathbb{C}'$ is canonically isomorphic to the base change of the Stein factorization over $\mathbb{R}$. This gives $Z'$ a natural, equivariant action of the Galois group $\Gal(\mathbb{C}/\mathbb{R})$, yielding the real structure $\sigma_{Z'}$.

Recall that we fixed  $x\in X(\mathbb{R})\not = \emptyset$, using the equivariance of $g$, we have that $z_0'=g(x_0)\in Z'(\mathbb{R})$. Together with the antiholomorphic involution $\sigma_{Z'}$, this induces a real structure $\tilde{\sigma}_{Z'}$ on the universal cover $\tilde{Z}'$ of $Z'$ by Proposition \ref{prop:rscover}.

We have by Theorem 9 of \cite{AC25} (see also Corollary 5) that 
\begin{equation}
    \pi_1(X)^{nilp} \cong \pi_1(Z')^{nilp}.
\end{equation}
In this compact case, the argument is easy and goes as follows. By \cite{ADH16} and because $Z'$ is normal, the nilpotent completions of $\pi_1(X)$ and $\pi_1(Z')$ are constructed using $H^1(Y)$ and the kernel of $H^1(Y)\otimes H^1(Y)\to H^2(Y)$ with $Y=X, Z'$ respectively. By properties of the Albanese map, we have that $H^1(X)=H^1(Z')$. The condition on $H^2(Z')$ follows from a strictness argument using \cite{D74}. See the proof of \cite[Thm 9]{AC25} for full details.

The diagonal real structure $\sigma_{prod}(x, \tilde{z}') = (\sigma_X(x), \tilde{\sigma}_{Z'}(\tilde{z}'))$ restricts to the fiber product precisely because $g \circ \sigma_X = \sigma_{Z'} \circ g$, and this restriction coincides with $\tilde{\sigma}_X$ by the uniqueness of basepoint-fixing lifts in covering space theory. This yields an equivariant proper holomorphic map:
\begin{equation}
\tilde{g}: \tilde{X}^{nilp} \to \tilde{Z}'^{nilp} \quad \text{such that} \quad \tilde{g} \circ \tilde{\sigma}_X = \tilde{\sigma}_{Z'} \circ \tilde{g},
\end{equation}
where these spaces denote the corresponding Malcev universal nilpotent covers.

By the definition of the Stein factorization, the map $h:Z'\to \Alb(X)$ is a finite morphism onto its image $\alpha(X)$. Therefore, the map on the nilpotent covers, $\tilde{h}:\tilde{Z}'^{nilp}\to \tilde{\alpha_X}(X)^{nilp}$, is also a finite covering.

As $\tilde{\alpha_X}(X)^{nilp}\subset \widetilde{\Alb}(X)^{nilp}\cong \mathbb{C}^N$ is a closed analytic subvariety, it follows that it is Stein. By \cite{GR79}, a finite holomorphic covering space of a Stein space is Stein. Thus, $\tilde{Z}'^{nilp}$ is a Stein space. Using definition \ref{defn:realHolConv}, we have proved the theorem. 

Let us give some details on how to generalize Lemma \ref{lem:SteinInva} to obtain an invariant psh function.

Now, by \cite{N62}, there exists a strictly plurisubharmonic exhaustion function $\psi':\tilde{Z}'^{nilp}\to \mathbb{R}$. It may be the case that $\psi'$ is not invariant under $\tilde{\sigma}_{Z'}$. 

Taking together the Albanese map $\tilde{h}: \tilde{Z}'^{nilp} \to \mathbb{C}^N$ with a finite set of holomorphic separating functions $G$ and their conjugate-symmetrized counterparts $H(x) = \overline{G(\tilde{\sigma}_{Z'}(x))}$, we construct a proper holomorphic embedding $\Psi = (\tilde{h}, G, H)$ into the ambient space $E = \mathbb{C}^N \times \mathbb{C}^K \times \mathbb{C}^K$. This embedding is strictly equivariant with respect to a standard linear antiholomorphic involution on $E$. Consequently, following Narasimhan's criteria for complex spaces, pulling back the strictly invariant, strictly plurisubharmonic standard Euclidean metric from $E$ directly furnishes the required invariant exhaustion function $\psi$ on the singular space $\tilde{Z}'^{nilp}$.
\end{proof}

\subsection{Empty real locus}

\begin{thm}\label{thm:NilEmpty} Let $X$ be a smooth projective variety defined over the reals. If  $X(\mathbb{R})=\emptyset$, then the Malcev universal nilpotent cover $\tilde{X}^{nilp}$ is dianalytic holomorphically convex.
\end{thm}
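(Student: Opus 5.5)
We follow the proof of Theorem \ref{thm:NilNonEmpty} step by step. The only change is at the end: since the lifted real structure now has no fixed points, we replace the invariant Euclidean exhaustion by a transverse one, as in Theorem \ref{thm:curvesEmpty}.

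\textbf{Step 1: transfer the real structure down the Albanese tower.} The real structure $\sigma_X$ still induces an antiholomorphic involution $\sigma_A$ on $\Alb(X)$. Since $\Alb(X)$ is an abelian variety, $\sigma_A$ is an affine antiholomorphic map, possibly without fixed points, and the Albanese map $\alpha$ is equivariant. Galois descent for the Stein factorization $X\overset{g}{\to} Z'\overset{h}{\to}\Alb(X)$ gives $\sigma_{Z'}$ with $g,h$ equivariant, exactly as before. This step uses no real points.

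\textbf{Step 2: lift to the nilpotent covers.} Fix a basepoint $x_0$ and a path from $x_0$ to $\sigma_X(x_0)$. This determines a lift $\tilde{\sigma}_X$ of $\sigma_X$ to $\tilde{X}^{nilp}$; it is an antiholomorphic automorphism whose square is a deck transformation. Then:
\begin{itemize}
\item Pushing the path forward by $g$ gives the compatible lift $\tilde{\sigma}_{Z'}$ on $\tilde{Z}'^{nilp}$, and similarly $\tilde\sigma_A$ on $\widetilde{\Alb}(X)\cong\mathbb{C}^N$.
\item The fiber-product description and the isomorphism $\pi_1(X)^{nilp}\cong\pi_1(Z')^{nilp}$ from \cite{AC25} give a proper equivariant map $\tilde{g}:\tilde{X}^{nilp}\to\tilde{Z}'^{nilp}$. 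It is proper because its fibers are the compact fibers of $g$.
\item The map $\tilde h:\tilde Z'^{nilp}\to\mathbb{C}^N$ is finite onto a closed subvariety and equivariant.
\item The lifted $\tilde{\sigma}_X$ is fixed-point-free: a fixed point would map to a fixed point of $\sigma_X$. It is properly discontinuous, since $\langle\tilde\sigma_X\rangle$ is a subgroup of the group of lifts, which acts properly discontinuously.
\end{itemize}
The same argument applied to $\tilde\sigma_{Z'}$ fails, because $\sigma_{Z'}$ may have real points. For that reason we will work on $\mathbb{C}^N$.

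\textbf{Step 3: a transverse exhaustion on $\mathbb{C}^N$.} On $\mathbb{C}^N$, $\tilde\sigma_A(v)=\overline{Mv}+b$ is an affine antiholomorphic isometry for the flat metric, and it is semisimple. Its Min-Set $L$ is an affine real subspace, and $\tilde\sigma_A$ acts on $L$ with translation part $\tau$. We take
\[
\phi(v)=d(v,L)^2+\rho(v),
\]
where $d(v,L)^2$ is a positive semidefinite quadratic form and $\rho$ is a correction chosen as follows:
\begin{itemize}
\item In the directions of $L$ orthogonal to $\tau$, $\tilde\sigma_A$ acts by a linear isometry, so $|\,\cdot\,|^2$ there is invariant and strictly convex; we include it in $\rho$.
\item Along the axis $\mathbb{R}\tau$, take the quadratic term $\mathrm{Re}(\langle v,\tau\rangle)^2$ with $\tau$ orthogonal-complexified. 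This term is not invariant. We compensate with the strictly plurisubharmonic form $|\langle v,\tau\rangle|^2 - \mathrm{Re}(\langle v,\tau\rangle)^2 = \mathrm{Im}(\langle v,\tau\rangle)^2$.
\end{itemize}
Concretely, I would first try $\phi=\sum$ of $|\ell_j|^2$ over a complex basis of linear forms vanishing on the complex span of the non-axial directions, plus $\mathrm{Im}(\ell_\tau)^2$. Here $\ell_\tau$ is a holomorphic linear form that is real along the axis, so $\mathrm{Im}(\ell_\tau)^2$ is invariant under translation along the axis. This function is plurisubharmonic and invariant, and it is strictly plurisubharmonic after adding $|\ell|^2$ terms for the directions transverse to the axis.

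\textbf{Step 4: pull back and conclude.} Pull $\phi$ back by $\tilde h$. Finiteness of $\tilde h$ gives strict plurisubharmonicity on the singular space via the Narasimhan criterion; if needed, we add an invariant symmetrized term built from separating functions $G$ and $H=\overline{G\circ\tilde\sigma_{Z'}}$, which keeps properness intact. The sublevel sets are, up to compact pieces, tubes around the axis. These are $\langle\tilde\sigma\rangle$-invariant, and their quotients are compact because $\tilde\sigma^2$ translates by $2\tau\neq0$ along the axis. Hence $\phi$ descends to a proper exhaustion on the quotient, and together with the equivariant proper map $\tilde g$ this gives dianalytic holomorphic convexity.

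\textbf{Expected obstacle.} The hardest step is Step 3: producing a function that is simultaneously strictly plurisubharmonic, invariant, and proper modulo $\langle\tilde\sigma\rangle$. A strictly plurisubharmonic function in $\mathbb{C}^N$ cannot be constant along a real line whose complexification is a complex line, so some growth along the axis is unavoidable. That growth must be absorbed by the quotient, for instance by making it periodic, as in $\mathrm{Im}(\ell_\tau)^2$ plus a periodic term. One must also check that $\tilde\sigma_A$ genuinely has nonzero translation; this follows from the absence of real points together with semisimplicity.
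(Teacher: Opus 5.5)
Your route is the paper's: Albanese--Stein factorization, compatible lifts, a transverse exhaustion on $\mathbb{C}^N$ pulled back along $\tilde h$. It breaks at the point the paper also glosses over. Your claim that $\tilde\sigma_A$ has nonzero translation ``by absence of real points together with semisimplicity'' is false. Semisimplicity is automatic for affine Euclidean isometries and says nothing about translation length. Write $\tilde\sigma_A(v)=Rv+b$. Since $\tilde\sigma_A^2$ is a deck translation, $R$ is an antilinear involution. Splitting $b=b_++b_-$ with $Rb_\pm=\pm b_\pm$, one gets $\tilde\sigma_A^2=T_{2b_+}$, where $2b_+$ is the image of the deck transformation $\tilde\sigma_X^2$ in $H_1(X,\mathbb{Z})/\mathrm{tors}$. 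Moreover, $\tilde\sigma_A$ has a fixed point iff $b_+=0$. Nothing forces $b_+\neq 0$ when $X(\mathbb{R})=\emptyset$: the real points of $X$ do not control those of $Z'$ or of the Albanese torsor, as you noticed for $Z'$.

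Concrete counterexamples: let $Q$ be the conic $x^2+y^2+z^2=0$, and let $E=\mathbb{C}/\mathbb{Z}[i]$ with $\sigma_E(z)=\bar z$. For $X=Q$ one has $N=0$ and $\tilde X^{nilp}=\mathbb{P}^1$. For $X=E\times Q$, take the lift $\tilde\sigma_X(z,w)=(\bar z,\sigma_Q(w))$ coming from a path in $\{0\}\times Q$; then $\tilde\sigma_A(z)=\bar z$. In both cases the conclusion of the theorem fails. A continuous strictly psh function on $Y$ forces every holomorphic map $\mathbb{P}^1\to Y$ to be constant. So a proper equivariant map factors through a point, respectively through some $F:\mathbb{C}\to Y$ with $F(\bar z)=\sigma_Y(F(z))$. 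Then the image point, respectively $F(0)$, is fixed by $\sigma_Y$, which the definition of a dianalytic Stein space forbids.

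So this step cannot be repaired by a better argument. One needs an extra hypothesis, e.g.\ that $\tilde\sigma_X^2$ has nonzero image in $H_1(X,\mathbb{Q})$, equivalently that $\tilde\sigma_A$ is fixed-point-free. The paper's proof assumes this tacitly when it declares $\tilde\sigma_A$ fixed-point-free. Under that hypothesis $\langle\tilde\sigma_A\rangle$ acts freely, hence so does $\langle\tilde\sigma_{Z'}\rangle$ by equivariance of $\tilde h$, and your worry about $\sigma_{Z'}$ disappears. Without it, ``working on $\mathbb{C}^N$'' is no escape, because the definition requires the target $(\tilde{Z}'^{nilp},\tilde\sigma_{Z'})$ itself to carry a fixed-point-free automorphism.

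Granting that hypothesis, your Step 3 is actually better than the paper's. The paper uses $d^2(z,\Min(\tilde\sigma_A))$, but $\Min(\tilde\sigma_A)$ is a translate of $\mathrm{Fix}(R)$, of real dimension $N$. Its quotient by $\langle\tilde\sigma_A\rangle$ is not compact for $N\ge 2$, so that function is not an exhaustion. Your function is $d^2(v,p+\mathbb{R}\tau)$ for a single axis in $\Min(\tilde\sigma_A)$, and it has all three needed properties:
\begin{itemize}
\item it is invariant, because $\tilde\sigma_A$ is an isometry preserving that line;
\item it is strictly psh: its Levi form at $w$ is $\tfrac12(|w^\perp|^2+|(iw)^\perp|^2)$, with ${}^\perp$ the component orthogonal to $\tau$, and this vanishes only if $w,iw\in\mathbb{R}\tau$;
\item it is proper modulo $\langle\tilde\sigma_A\rangle$.
\end{itemize}
In unitary coordinates where $\tilde\sigma_A(z)=\bar z+le_1$, it is $(\mathrm{Im}\,z_1)^2+\sum_{j\ge2}|z_j|^2$. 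There are two slips in your write-up. In your ``concrete'' formula, the $\ell_j$ must vanish on $\mathbb{C}\tau$, not on the span of the non-axial directions. And there is no unavoidable growth along the axis, since a strictly psh function may vanish on a totally real line.

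Step 4 has a second, independent gap. Pulling back by a finite map destroys strictness at branch points: $|w|^2$ becomes $|u|^4$ under $w=u^2$. Your correction $|G|^2+|\overline{G\circ\tilde\sigma_{Z'}}|^2$ is not $\tilde\sigma_{Z'}$-invariant, because here $\tilde\sigma_{Z'}^2$ is a nontrivial deck transformation, not the identity. One fix is to pass to $W=\tilde{Z}'^{nilp}/\langle\tilde\sigma_{Z'}^2\rangle$. This $W$ is finite over the Stein manifold $\mathbb{C}^N/\langle\tilde\sigma_A^2\rangle\cong\mathbb{C}^*\times\mathbb{C}^{N-1}$, hence Stein, and $\tilde\sigma_{Z'}$ induces on it a fixed-point-free antiholomorphic \emph{involution}. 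The symmetrization from the non-empty case then works on $W$, and the resulting function pulls back to $\tilde{Z}'^{nilp}$ with all the required properties. This even makes Step 3 unnecessary.
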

\begin{proof}
    Let $\alpha:X\to \Alb(X)$ be the Albanese map and write its Stein factorization $X\overset{g}{\to} Z' \overset{h}{\to} \Alb(X)$. As in the proof of Theorem \ref{thm:NilNonEmpty}, $Z'$ inherits a real structure $\sigma_{Z'}$. Choose any basepoint $x\in X$ and compatible lifts in the nilpotent covers. We obtain lifted maps $\tilde{g}, \tilde{h}$ and $\tilde{\sigma}_X, \tilde{\sigma}_{Z'}, \tilde{\sigma}_A$ which satisfy:
    $$\tilde{g}\circ \tilde{\sigma}_X = \tilde{\sigma}_{Z'}\circ \tilde{g}, \quad \tilde{h}\circ \tilde{\sigma}_{Z'} = \tilde{\sigma}_A \circ \tilde{h}. $$

    By \cite[Thm 9]{AC25}, we have $\pi_1(X)^{nilp}=\pi_1(Z')^{nilp}$, hence $\tilde{X}^{nilp}\cong X\times_{Z'} \tilde{Z}'^{nilp}$ is the fiber product,  $\tilde{g}$ is proper and holomorphic, and $\tilde{h}$ is finite onto its image inside $\mathbb{C}^{N}\cong \widetilde{\Alb}(X)^{nilp}$.

    The map $\tilde{\sigma}_A:\mathbb{C}^{N}\xrightarrow[]{}\mathbb{C}^{N}$ is a fixed-point-free antiholomorphic affine isometry, so we have that  $\tilde{\sigma}_A(z) = A\bar{z} + b$. The Min-Set $L = \Min(\tilde{\sigma}_A)$ is a non-empty affine subspace. Since $\tilde{\sigma}_A$ is antiholomorphic, $L$ is a totally real subspace of $\mathbb{C}^N$, i.e., $L \cap iL = \{0\}$.

    Consider the squared Euclidean distance to this affine subspace: $\Phi(z) = d_{\mathbb{C}^N}^2(z, L)$.
    
    In $\mathbb{C}^N$, the squared distance to a totally real subspace is a strictly plurisubharmonic function. This is because the Hessian of $\Phi$ is positive definite on the normal space to $L$. Since $L$ is totally real, the map $J: TL \to N_L$ is an isomorphism. Thus, strict convexity in normal directions implies strict plurisubharmonicity in all complex directions.

    Since $\tilde{\sigma}_A$ preserves $L$, we have that $\Phi$ is invariant. The quotient space $Y = \mathbb{C}^N / \langle \tilde{\sigma}_A \rangle$ is a complex manifold containing the compact totally real torus $L / \langle \tilde{\sigma}_A \rangle$. The function $\Phi$ descends to a strictly plurisubharmonic proper exhaustion function on $Y$.

\end{proof}

\bibliographystyle{smfalpha}
\bibliography{sample}
\end{document}